\newtheorem{theorem}{Theorem}
\renewcommand{\subsection}{\@startsection{subsection}{1}
{0pt}{3.25ex plus 1ex minus.2ex}{-1em}{\normalfont\normalsize\bf}}
\begin{document}

\date{\empty}
\title{{\bf Equi-bounded on order intervals families of semi-norms}}
\maketitle
\author{\centering{{Eduard Emelyanov$^{1}$\\ 
\small $1$ Sobolev Institute of Mathematics, Novosibirsk, Russia}
\abstract{It is proved that equi-bounded on order intervals families of semi-norms on an ordered Banach space with a closed generating cone are equi-continuous.}

\vspace{3mm}
{\bf Keywords:} ordered Banach space, order interval, equi-continuous family of semi-norms

\vspace{3mm}
{\bf MSC2020:} {\normalsize 46A40, 47B60, 47B65}
}}

\section{Introduction}

\hspace{4mm}
It is well known that order bounded operators from a Banach lattice to a normed lattice are continuous 
(see, for example \cite[Theorem 1.31]{AA2002}). In \cite[Theorem 2.1]{E0-2025} it was proved that 
collectively order-to-norm bounded sets of operators from a Banach lattice to a normed lattice are equi-continuous.
The later result was extended in \cite[Theorem 2.1]{EEG2025} to the setting of ordered Banach spaces
with closed generating cones. In the present note, we generalize the above results by proving that 
every equi-bounded on order intervals family of semi-norms on an ordered Banach space with a closed generating cone 
is equi-continuous. 

Throughout, vector spaces are real and ${\cal L}(X,Y)$ stands for the space of linear operators between vector spaces $X$ and $Y$.
The closed unit ball of a normed space $X$ is denoted by $B_X$.
Let $X$ be an ordered vector space. If $Y$ is a normed space, an operator $T\in{\cal L}(X,Y)$ is {\em order-to-norm bounded} whenever $T[a,b]$ is bounded 
for every order interval $[a,b]\subseteq X$. More generally, if $Y$ is a topological vector space,
a family ${\cal T}\subseteq{\cal L}(X,Y)$ is {\em collectively order-to-topology bounded} whenever the 
set ${\cal T}[a,b]=\bigcup\limits_{T\in{\cal T}}T[a,b]$ is bounded for every $[a,b]\subseteq X$. 
A family ${\cal S}$ of semi-norms on a vector space $X$ is {\em equi-bounded on a subset} $A$ of $X$ if ${\cal S}A$ is bounded in $\mathbb{R}$
(in particular, a family ${\cal S}$ of semi-norms on a normed space $X$ is equi-continuous iff it is equi-bounded on $B_X$). 
The set of equi-bounded on order intervals of an ordered vector space $X$ semi-norms is denoted by $\text{\bf S}_{eboi}(X)$. 
For further terminology and notations that are not explained in the text, we refer to \cite{AA2002,AT2007}.

\section{Main result}

\hspace{4mm}
The following theorem extends recent results of \cite[Theorem 2.1]{E0-2025} and \cite[Theorem 2.1]{EEG2025} to the semi-norm setting. 

\begin{theorem}\label{main-1}
If $X$ is an ordered Banach space with a closed generating cone, then every family of semi-norms on $X$ equi-bounded on order intervals is equi-continuous.
\end{theorem}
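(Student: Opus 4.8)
The plan is to use the equivalence recorded in the introduction: a family of semi-norms on a normed space is equi-continuous if and only if it is equi-bounded on the unit ball. Writing $X_+$ for the cone and ${\cal S}$ for the given family, it therefore suffices to show $\sup\{p(x) : p \in {\cal S},\ x \in B_X\} < \infty$. I would carry this out in two stages: first bound ${\cal S}$ uniformly on the positive part $B_X \cap X_+$ of the ball, and then propagate the bound to all of $B_X$ by decomposing arbitrary elements into differences of positive ones.

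For the first stage I would argue by contradiction via a gliding-hump construction. Put $f(u) = \sup_{p \in {\cal S}} p(u)$ for $u \in X_+$, a positively homogeneous and subadditive map into $[0,+\infty]$, and suppose $f$ were unbounded on $B_X \cap X_+$. Then for each $n$ one could choose $u_n \in X_+$ with $\|u_n\| \le 1$ and $p_n \in {\cal S}$ with $p_n(u_n) > n\,2^n$. Since $\sum_n 2^{-n}\|u_n\| \le 1$ and $X$ is complete, the series $w = \sum_n 2^{-n} u_n$ converges; because $X_+$ is closed and every partial sum lies in $X_+$, both $w \in X_+$ and $w - 2^{-n}u_n = \sum_{k \ne n} 2^{-k}u_k \in X_+$, so that $2^{-n}u_n \in [0,w]$ for all $n$. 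Equi-boundedness of ${\cal S}$ on the single order interval $[0,w]$ then yields a finite constant $C_w$ with $2^{-n}p_n(u_n) = p_n(2^{-n}u_n) \le C_w$, whence $n\,2^n < p_n(u_n) \le 2^n C_w$, i.e.\ $n < C_w$ for every $n$ — a contradiction. Hence $f$ is bounded on $B_X \cap X_+$ by some constant $C_0$.

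For the second stage I would invoke the And\^{o}--Klee decomposition theorem: a closed generating cone in a Banach space admits bounded decompositions, so there is a constant $M \ge 1$ such that each $x \in X$ can be written $x = u - v$ with $u,v \in X_+$ and $\|u\|,\|v\| \le M\|x\|$. For $x \in B_X$ and $p \in {\cal S}$ this gives $p(x) \le p(u) + p(v) \le f(u) + f(v)$; since $u/M, v/M \in B_X \cap X_+$ and $f$ is positively homogeneous, $f(u) = M f(u/M) \le M C_0$ and likewise $f(v) \le M C_0$, so $p(x) \le 2M C_0$ uniformly in $p \in {\cal S}$ and $x \in B_X$. This is precisely equi-boundedness on $B_X$, hence equi-continuity.

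The only genuinely external input, and the step I expect to be the main obstacle, is the And\^{o}--Klee decomposition used in the second stage, which is where closedness, generation, and completeness of the cone are combined. Within the argument itself, the delicate point is the gliding-hump step: it is essential that completeness produces the element $w$ and that closedness of $X_+$ simultaneously yields $w \in X_+$ and the dominations $2^{-n}u_n \le w$, since this is exactly what compresses the countably many bad witnesses $(u_n,p_n)$ into a single order interval on which the hypothesis can be applied.
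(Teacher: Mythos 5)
Your proof is correct and follows essentially the same route as the paper: the paper's appeal to the Krein--Smulian theorem (yielding $\alpha B_X\subseteq B_X\cap X_+-B_X\cap X_+$) is the same bounded-decomposition fact you invoke as And\^{o}--Klee, and your series $w=\sum_n 2^{-n}u_n$ plays exactly the role of the paper's $x=\sum_n n^{-2}x_n$ in compressing the bad witnesses into one order interval. The only difference is organizational: you establish the bound on $B_X\cap X_+$ first and then propagate it, while the paper runs the whole argument as a single contradiction.
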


\begin{proof}
We prove the assertion by contradiction. Assume that ${\cal S}$ is not equi-continuous for some ${\cal S}\in\text{\bf S}_{eboi}(X)$. 
By the Krein -- Smulian theorem (cf., \cite[p.85]{AT2007}), $\alpha B_X\subseteq B_X\cap X_+-B_X\cap X_+$ for some $\alpha>0$,
and hence ${\cal S}$ is not equi-continuous on $B_X\cap X_+$. Then, there exist sequences $(x_n)$ in $B_X\cap X_+$ 
and $(s_n)$ in ${\cal S}$ such that $s_n(x_n)>n^3$ for all $n$. Set $x:=\|\cdot\|$-$\sum\limits_{n=1}^\infty n^{-2}x_n\in X_+$.
Since ${\cal S}\in\text{\bf S}_{eboi}(X)$ then ${\cal S}[0,x]\subseteq[0,N]$ for some $N\in\mathbb{N}$.
It follows from $n^{-2}x_n\in[0,x]$ that $s_n(n^{-2}x_n)\in[0,N]\subseteq[0,n]$ for large enough $n$.
It is absurd, since $s_n(n^{-2}x_n)>n$ for all $n$. 
\end{proof}

\noindent 
Norm completeness of $X$ is essential in Theorem \ref{main-1}. For example, a semi-norm $s$ on $c_{00}$ defined by 
$s(x)=\sum_{k=1}^\infty x_k$ is equi-bounded on order intervals of $c_{00}$, yet not equi-continuous.
The condition that $X_+$ is generating is essential too. To see this, take any semi-norm that is not equi-continuous on a Banach space
$X$ with a trivial cone $X_+=\{0\}$. The closeness of $X_+$ in a Banach space $X$ is also essential.
So see this, take any infinite dimensional Banach space $X$, make it an ordered Banach space, and set a semi-norm $s(x)=\|Tx\|$ as in \cite[Example 2.12 b)]{EEG2025}.

\medskip
The key result of \cite{EEG2025} turns to be a consequence of Theorem \ref{main-1} as follows.

\begin{theorem}\label{cor-1 to main-1}
{\rm (\cite[Theorem 2.1]{EEG2025})}
Let $X$ be an ordered Banach space with a closed generating cone, $Y$ a normed space, and ${\cal T}$ a collectively order-to-norm bounded subset of ${\cal L}(X,Y)$. 
Then ${\cal T}$ is bounded in the operator norm.
\end{theorem}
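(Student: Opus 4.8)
The plan is to deduce this theorem directly from Theorem~\ref{main-1} by manufacturing, out of the family of operators ${\cal T}$, a family of semi-norms to which the main result applies. Concretely, to each $T\in{\cal T}$ I would associate the functional $s_T\colon X\to\mathbb{R}$ defined by $s_T(x)=\|Tx\|_Y$. Since $T$ is linear and $\|\cdot\|_Y$ is a norm, each $s_T$ is a semi-norm on $X$ (positive homogeneity and the triangle inequality are inherited from the norm via linearity of $T$; $s_T$ need not be definite, which is exactly why we land in the semi-norm setting rather than the norm setting). I then set ${\cal S}=\{s_T:T\in{\cal T}\}$.

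The next step is to verify that ${\cal S}\in\text{\bf S}_{eboi}(X)$, i.e.\ that ${\cal S}$ is equi-bounded on every order interval. Fix an order interval $[a,b]\subseteq X$. Because ${\cal T}$ is collectively order-to-norm bounded, the set ${\cal T}[a,b]=\bigcup_{T\in{\cal T}}T[a,b]$ is norm-bounded in $Y$, so there is a constant $M\geq 0$ with $\|Tx\|_Y\leq M$ for all $x\in[a,b]$ and all $T\in{\cal T}$. Rephrased in terms of the semi-norms, this says $s_T(x)\leq M$ for every $x\in[a,b]$ and every $T\in{\cal T}$, so that ${\cal S}[a,b]\subseteq[0,M]$ is bounded in $\mathbb{R}$. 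Hence ${\cal S}$ is equi-bounded on order intervals, and the hypotheses on $X$ (ordered Banach space with a closed generating cone) are exactly those of Theorem~\ref{main-1}.

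Applying Theorem~\ref{main-1} now yields that ${\cal S}$ is equi-continuous. The final step is to translate this back into operator-norm boundedness of ${\cal T}$, using the observation recorded in the definitions: a family of semi-norms on a normed space is equi-continuous precisely when it is equi-bounded on the closed unit ball $B_X$. Thus there is a constant $C\geq 0$ with $s_T(x)=\|Tx\|_Y\leq C$ for all $x\in B_X$ and all $T\in{\cal T}$, which is exactly the statement $\|T\|\leq C$ for every $T\in{\cal T}$, i.e.\ ${\cal T}$ is bounded in the operator norm.

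I do not expect a genuine obstacle here: once Theorem~\ref{main-1} is in hand, the statement is essentially a corollary, and the argument is a bookkeeping reduction. The only point deserving a moment of care is the two-way dictionary at the endpoints, namely that collective order-to-norm boundedness of ${\cal T}$ is literally equi-boundedness of ${\cal S}$ on order intervals, and that equi-continuity of ${\cal S}$ on the normed space $X$ is literally equi-boundedness on $B_X$; both are immediate from the definition $s_T=\|T(\cdot)\|_Y$, so the passage from the family of operators to the family of semi-norms and back is faithful.
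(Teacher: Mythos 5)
Your proposal is correct and follows exactly the paper's own argument: define $s_T(x)=\|Tx\|_Y$, observe that collective order-to-norm boundedness of ${\cal T}$ is precisely equi-boundedness of $\{s_T\}$ on order intervals, apply Theorem~\ref{main-1}, and read off operator-norm boundedness from equi-boundedness on $B_X$. The only difference is that you spell out the routine verifications the paper leaves implicit.
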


\begin{proof}
For each $T\in{\cal T}$ we define a semi-norm $s_T$ on $X$ by $s_T(x)=\|Tx\|$.
Since ${\cal T}$ is collectively order-to-norm bounded then ${\cal S}_{\cal T}=\{s_T:T\in{\cal T}\}\in\text{\bf S}_{eboi}(X)$. 
By Theorem \ref{main-1}, ${\cal S}_{\cal T}$ is equi-continuous. So, ${\cal S}_{\cal T}$ is bounded on $B_X$, and hence
${\cal T}$ is norm bounded.
\end{proof}

\medskip
The following consequence of Theorem \ref{main-1} should be compared with \cite[Theorem 2.5]{E2-2025}.

\begin{theorem}\label{cor-2 to main-1}
Let $X$ be an ordered Banach space with a closed generating cone and $(Y,\tau)$ a locally convex topological vector space,
and ${\cal T}$ a collectively order-to-topology bounded subset of ${\cal L}(X,Y)$. Then ${\cal T}$ is equi-continuous.
\end{theorem}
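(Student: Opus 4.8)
The plan is to reduce the statement to Theorem \ref{main-1} by decomposing the locally convex topology $\tau$ into its generating semi-norms. Since $(Y,\tau)$ is locally convex, I would fix a family $\{p_i:i\in I\}$ of semi-norms on $Y$ generating $\tau$. For each $i\in I$, I would form the family of semi-norms $\mathcal{S}_i=\{p_i\circ T:T\in\mathcal{T}\}$ on $X$, where $(p_i\circ T)(x)=p_i(Tx)$; each $p_i\circ T$ is a semi-norm on $X$, being the composition of a linear map with a semi-norm.

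First I would verify that $\mathcal{S}_i\in\text{\bf S}_{eboi}(X)$ for every $i\in I$. Given an order interval $[a,b]\subseteq X$, one has $\mathcal{S}_i[a,b]=p_i(\mathcal{T}[a,b])$. Because $\mathcal{T}$ is collectively order-to-topology bounded, the set $\mathcal{T}[a,b]$ is $\tau$-bounded in $Y$; and in a locally convex space a set is bounded precisely when each generating semi-norm is bounded on it, so $p_i(\mathcal{T}[a,b])$ is a bounded subset of $\mathbb{R}$. Hence $\mathcal{S}_i$ is equi-bounded on every order interval.

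Next I would apply Theorem \ref{main-1} to each $\mathcal{S}_i$: since $X$ is an ordered Banach space with a closed generating cone, each $\mathcal{S}_i$ is equi-continuous, i.e. equi-bounded on $B_X$. Thus for every $i\in I$ there is a constant $M_i$ with $p_i(Tx)\le M_i$ for all $T\in\mathcal{T}$ and all $x\in B_X$, and by homogeneity $p_i(Tx)\le M_i\|x\|$ for all $x\in X$.

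Finally I would assemble these per-index estimates into equi-continuity of $\mathcal{T}$. A family of linear maps from a normed space $X$ into the locally convex space $(Y,\tau)$ is equi-continuous exactly when, for each generating semi-norm $p_i$, the semi-norms $\{p_i\circ T:T\in\mathcal{T}\}$ are uniformly dominated by a multiple of $\|\cdot\|$; the inequalities $p_i(Tx)\le M_i\|x\|$ supply precisely this, since a basic $\tau$-neighborhood $\bigcap_{i\in F}\{y:p_i(y)<\varepsilon\}$ is then pulled back uniformly by the norm-ball of radius $\varepsilon/\max_{i\in F}M_i$. The only genuinely delicate point is the bookkeeping about locally convex spaces — the equivalence of $\tau$-boundedness with boundedness against each $p_i$, and the reduction of operator equi-continuity to the generating semi-norms — while the substantive work is carried entirely by Theorem \ref{main-1}.
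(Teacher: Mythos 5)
Your proposal is correct and follows essentially the same route as the paper: the paper works with the Minkowski functionals $f_U$ of a convex base of $\tau$-neighborhoods of zero, which is just the generating family of semi-norms $\{p_i\}$ in different clothing, and both arguments compose these with the operators, verify equi-boundedness on order intervals, and invoke Theorem \ref{main-1}. No substantive difference.
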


\begin{proof}
Take a locally convex base ${\cal U}\subseteq\tau(0)$ of the topology $\tau$ at zero, denote by $f_U$ the Minkowski functional of $U\in{\cal U}$.
For every $T\in{\cal T}$ and $U\in{\cal U}$ we define a semi-norm $s_T^U$ on $X$ by $s_T^U(x)=|f_U(Tx)|$. 
Since ${\cal T}$ is collectively order-to-topology bounded, the set ${\cal T}[a,b]$ is $\tau$-bounded in $Y$ for every order interval $[a,b]\subseteq X$.
Therefore, ${\cal S}_{\cal T}^U=\{s_T^U:T\in{\cal T}\}\in\text{\bf S}_{eboi}(X)$ for each $U\in{\cal U}$.
Theorem \ref{main-1} implies that the set ${\cal S}_{\cal T}^U$ is bounded on $B_X$ for each $U\in{\cal U}$, say
$\sup\limits_{T\in{\cal T}; x\in B_X}|f_U(Tx)|=\sup\limits_{T\in{\cal T}; x\in B_X}s_T^U(x)\le N_U\in\mathbb{N}$.
It follows ${\cal T}\big(N_U^{-1}B_X\big)\subseteq U$ that every $U\in{\cal U}$. Since ${\cal U}$ is a base of $\tau$ at zero, ${\cal T}$ is equi-continuous.
\end{proof}

\medskip



\smallskip
{\normalsize 
}
\end{document}